\def\hpq0{h^{p,q}_{\leq 0}}
\def\Hpq0{\H_{\leq 0}^{p,q}}
\def\dbar{\bar\partial}
\def\ddbar{\partial\dbar}
\def\R{{\mathbb R}}
\def\C{{\mathbb C}}
\def\D{\mathcal{D}}
\def\A{{\mathcal A}}
\def\H{{\mathcal H}}
\def\Re{{\rm Re\,  }}
\def\Im{{\rm Im\,  }}
\def\be{\begin{equation}}
\def\ee{\end{equation}}
\newtheorem{thm}{Theorem}[section]
\newtheorem{prop}[thm]{Proposition}
\theoremstyle{definition}
\theoremstyle{remark}
\newtheorem{preremark}{Remark}
\newtheorem{preex}{Example}
\numberwithin{equation}{section}
\title[]
{The openness conjecture and complex Brunn-Minkowski inequalities.}
\address{Department of Mathematics\\Chalmers University
  of Technology \\
 } 
\email{ bob@chalmers.se}
\author[]{ Bo Berndtsson}
\begin{document}

\begin{abstract}We discuss recent versions of the Brunn-Minkowski inequality in the complex setting, and use it to prove the openness conjecture of Demailly and Koll\'ar. 
\end{abstract}
\maketitle
{\small \it Dedicated to Bradley Manning and Edward Snowden in recognition of their work for openness}

\section{Introduction}

Let $u$ be a plurisubharmonic function defined in the unit ball, $B$,  of $\C^n$. 
The {\it openness conjecture} of Demailly and  Koll\'ar (\cite{Demailly-Kollar}) states that the interval of numbers $p>0$ such that 
\be
\int_{rB} e^{-pu} <\infty,
\ee
for some $r>0$, is open. This is quite easy to see in one variable, since the singularities of $u$ are then given quite explicitly by the Green potential of $\Delta u$, but the higher dimensional case is much more subtle. The openness conjecture was first proved in dimension 2 by Favre and Jonsson (see \cite{Favre-Jonsson}), and then for all dimensions in \cite{0Berndtsson}. After that, simpler proofs and generalizations to the so called {\it strong openness conjecture} (see below), have been given by Guan-Zhou, \cite{Guan-Zhou} and Hiep, \cite{Hiep}, see also \cite{Lempert}. 

The proof of the openness conjecture from \cite{0Berndtsson} was based on positivity properties of certain vector bundles from \cite{2Berndtsson}, that in some ways can be seen as a complex variables generalization of the Brunn-Minkowski theorem. The aim of this survey article is to describe these 'complex Brunn-Minkowski inequalities' and explain how they can be applied in this context. In this we will basically restrict ourselves to the simplest cases and refer to the original articles for complete statements and proofs. It should be stressed that the recent proofs of Guan-Zhou and Hiep are actually simpler than the method to prove the openness conjecture presented here, but we hope that the original proof still has some interest and that the exposition here also may serve as an introduction to how our 'complex Brunn-Minkowski inequalities' can be applied. The method here also gives for free a bound on the exponents $p$, that can also be obtained in the setting of Guan-Zhou, \cite{2Guan-Zhou}, but with more work.

In the last section we also give  a very brief account of the strong openness conjecture and sketch a conjectural picture how the strong openness theorem might fit into our method.  

\section{The  Brunn-Minkowski theorem}

The classical Brunn-Minkowski theorem for convex bodies (see e g \cite{Gardner} for a nice account, including history and applications) can be formulated in the following way.
\begin{thm} Let $A_0$ and $A_1$ be two convex bodies in $\R^n$, and denote by
\be
A_t:=\{a=ta_1+ (1-t)a_0; a_0\in A_0 \, \, \text{and} \, a_1\in A_1\},
\ee
for $0\leq t\leq 1$. Let $|A|$ be the Lebesgue measure of a set $A$. Then the function
$$
t\to |A_t|^{1/n}
$$
is concave. 
\end{thm}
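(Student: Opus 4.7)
The plan is to reduce the concavity statement to the additive (Brunn--Minkowski) inequality
\begin{equation*}
|(1-t)A_0 + tA_1|^{1/n} \geq (1-t)|A_0|^{1/n} + t|A_1|^{1/n}, \qquad t \in [0,1],
\end{equation*}
and then prove this inequality by induction on geometric complexity. Concavity of $t \mapsto |A_t|^{1/n}$ will follow once one observes that by convexity of $A_0$ and $A_1$, $A_{(1-s)t_0+st_1} = (1-s)A_{t_0} + s A_{t_1}$ for all $t_0,t_1,s \in [0,1]$; applying the displayed inequality on each subinterval then gives the weighted midpoint concavity directly.

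For the base case I take $A_0 = \prod_{i=1}^n [0,a_i]$ and $A_1 = \prod_{i=1}^n [0,b_i]$ coordinate boxes. Then $(1-t)A_0 + tA_1 = \prod_i [0,(1-t)a_i + tb_i]$, so the claim reduces to
\begin{equation*}
\prod_{i=1}^n \bigl((1-t)a_i + tb_i\bigr)^{1/n} \geq (1-t)\prod_i a_i^{1/n} + t\prod_i b_i^{1/n},
\end{equation*}
which follows from a single application of the weighted AM--GM inequality after dividing both sides by the left-hand side and telescoping.

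The inductive step extends the inequality from a single box to any finite disjoint union of axis-aligned boxes. If $A_0$ decomposes into $N \geq 2$ such boxes, a coordinate hyperplane $H = \{x_i = c\}$ is chosen separating two of them, splitting $A_0$ into $A_0^-, A_0^+$, each a union of at most $N-1$ boxes. A parallel hyperplane is then selected (by an intermediate-value argument) that splits $A_1$ into $A_1^-, A_1^+$ with matching volume ratio $\lambda := |A_0^-|/|A_0| = |A_1^-|/|A_1|$. Since $(1-t)A_0^\pm + tA_1^\pm$ are separated by a translate of $H$, their volumes add, and two uses of the inductive hypothesis yield
\begin{equation*}
|A_t| \geq \lambda \bigl((1-t)|A_0|^{1/n} + t|A_1|^{1/n}\bigr)^n + (1-\lambda)\bigl((1-t)|A_0|^{1/n} + t|A_1|^{1/n}\bigr)^n,
\end{equation*}
which collapses to the desired estimate after taking $n$-th roots. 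Passing from finite unions of boxes to arbitrary compact convex bodies is then routine, by inner-approximating $A_0$ and $A_1$ in volume with dyadic box decompositions and taking the limit.

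I expect the induction step to be the main obstacle, and specifically the matching-ratio choice of the separating hyperplane in $A_1$: it relies on continuity of the slice volumes in the cutting parameter, and the whole argument only closes because the scaling law $|\mu X|^{1/n} = \mu|X|^{1/n}$ makes the weight $\lambda$ drop out of the final inequality. Every other ingredient, namely the box case, the reduction from the additive inequality to concavity, and the approximation of convex bodies by box unions, is essentially bookkeeping around AM--GM and standard regularity of Lebesgue measure.
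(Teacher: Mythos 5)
Your argument is the classical Hadwiger--Ohm ``brick'' proof of the Brunn--Minkowski inequality, and it is essentially sound: the reduction of concavity to the additive inequality via $\alpha A+\beta A=(\alpha+\beta)A$ for convex $A$, the AM--GM computation for a pair of boxes, the equal-volume-ratio bisection in the inductive step, and the inner approximation by box unions are all standard and correct. This is, however, a genuinely different route from the one the paper has in mind. The paper does not prove Theorem 2.1 at all (it refers to Gardner's survey for the classical statement); its own thematic path runs in the opposite direction of generality: one first observes that by homogeneity of Lebesgue measure the additive form is equivalent to the multiplicative form (concavity of $t\mapsto\log|A_t|$), which is the special case of Pr\'ekopa's theorem (Theorem 2.3) for the indicator-type convex function of the convex body $\A\subset\R^{n+1}$; Pr\'ekopa's theorem in turn is deduced, in Section 3, from the complex Theorem 3.1 (positivity of the curvature of the Bergman-space bundle, proved via H\"ormander's $L^2$-estimates) by imposing full toric symmetry. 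Your elementary proof buys self-containedness and avoids any analysis beyond Lebesgue measure; the paper's route buys a far-reaching generalization (the complex Brunn--Minkowski theorem) of which the classical statement is the maximally symmetric shadow.

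One bookkeeping point you should repair: as stated, your induction is on the number $N$ of boxes in $A_0$ alone, but after the cut the pieces $A_1^{\pm}$ may contain as many boxes as $A_1$ did, so the recursion does not terminate at your base case of two single boxes. The standard fix is to induct on the \emph{total} number of boxes in $A_0$ and $A_1$ combined (choosing the separating hyperplane inside whichever of the two sets has at least two boxes); equivalently, strengthen the base case to ``$A_0$ a single box, $A_1$ an arbitrary finite union of boxes.'' With that adjustment the argument closes.
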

An equivalent  statement that relies less on the additive structure on $\R^n$, and is more suitable for the complex variants that we will describe later is the following.
\begin{thm} Let $\A$ be a convex body in $\R^{n+1}$ and denote by
\be
A_t=\{a\in \R^n; (t,a)\in \A\}.
\ee
Then the function $t\to |A_t|^{1/n}$ is concave.
\end{thm}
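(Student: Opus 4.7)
The plan is to reduce Theorem 2.2 directly to Theorem 2.1 by showing that the Minkowski combination $(1-s)A_{t_0} + s A_{t_1}$ sits inside the fiber $A_{(1-s)t_0 + st_1}$, after which the first Brunn--Minkowski theorem finishes the job.

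The first step is geometric: take any $t_0, t_1$ in the projection $\pi(\A) \subset \R$, any $a_0 \in A_{t_0}$ and $a_1 \in A_{t_1}$, and any $s \in [0,1]$. Then both $(t_0, a_0)$ and $(t_1, a_1)$ belong to $\A$, so by convexity of $\A$ their convex combination
\be
(1-s)(t_0,a_0) + s(t_1,a_1) = \bigl((1-s)t_0 + s t_1,\; (1-s)a_0 + s a_1\bigr)
\ee
also lies in $\A$. Reading off the second coordinate, this says
\be
(1-s)A_{t_0} + s A_{t_1} \subseteq A_{(1-s)t_0 + s t_1}.
\ee
A similar argument shows that each fiber $A_t$ is itself a convex body in $\R^n$ (convexity from convex combinations at fixed height, boundedness from boundedness of $\A$).

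The second step is to invoke Theorem 2.1 applied to the convex bodies $A_{t_0}$ and $A_{t_1}$: the function $s \mapsto \bigl|(1-s)A_{t_0} + s A_{t_1}\bigr|^{1/n}$ is concave on $[0,1]$, and in particular
\be
\bigl|(1-s)A_{t_0} + s A_{t_1}\bigr|^{1/n} \geq (1-s)|A_{t_0}|^{1/n} + s |A_{t_1}|^{1/n}.
\ee
Combining this with monotonicity of Lebesgue measure and the inclusion from the first step yields
\be
|A_{(1-s)t_0+st_1}|^{1/n} \geq (1-s)|A_{t_0}|^{1/n} + s |A_{t_1}|^{1/n},
\ee
which is exactly the concavity asserted by Theorem 2.2.

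There is essentially no hard step here; the only thing to watch is the domain of definition. The function $t \mapsto |A_t|^{1/n}$ is defined and should be treated as concave on the interval $I = \pi(\A) \subset \R$, where it is nonnegative and finite; outside $I$ the fiber is empty and the function vanishes, and the inequality above automatically extends concavity to the closure. If one instead wants to interpret Theorem 2.2 as a statement about all $t \in \R$ with the convention $|A_t|^{1/n} = 0$ when $A_t$ is empty, one just notes that concavity of a nonnegative function on its support-interval, which vanishes at the endpoints, is compatible with extension by zero; this is the mildest possible technicality and is the only thing one needs to verify beyond the two displayed inequalities above.
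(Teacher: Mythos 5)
Your proof is correct and follows exactly the route the paper itself indicates: the key inclusion $(1-s)A_{t_0}+sA_{t_1}\subseteq A_{(1-s)t_0+st_1}$ for convex $\A$, followed by an application of Theorem 2.1 and monotonicity of Lebesgue measure. This is precisely the paper's argument for deducing Theorem 2.2 from Theorem 2.1, so there is nothing further to add.
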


The equivalence of these two statements is not hard to see. In order to deduce the first statement from the second we let $\A$ be the convex hull in $\R^{n+1}$ of $\{0\}\times A_0\cup \{1\}\times A_1$, and for the converse it suffices to observe that if $A_t$ is defined by (2.2), then 
$$
 t A_1+(1-t) A_0\subset  A_t  ,
$$
if $\A$ is convex. 

There is yet another version of the theorem that will be useful for us. To prove Theorem 2.1 it actually suffices to prove the seemingly weaker inequality
$$
|A_t|\geq \min(|A_0|, |A_1|),
$$
which of course trivially follows from the concavity. This is because we can rescale the sets and use the homogenity of Lebesgue measure. Therefore we see that Theorems 2.1 and 2.2 are also equivalent to saying that 
$$
t\to \log|A_t|
$$
is concave, since this also implies the min-statement. This is sometimes called the multiplicative form of the Brunn-Minkowski theorem.

One  reason why the multiplicative form is often more useful is that it allows a functional version, known as Pr\'ekopa's theorem, \cite{Prekopa}.
\begin{thm}
Let $\phi(t,x)$ be a convex function on $\R^{n+1}=\R_t\times \R^n_x$. Then
\be
\tilde\phi(t):=\log\int_{R^n} e^{-\phi(t,x)} dx
\ee
is concave.
\end{thm}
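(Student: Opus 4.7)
The plan is to deduce Prékopa's theorem from the multiplicative form of Brunn--Minkowski (the concavity of $t \mapsto \log |A_t|$, noted equivalent to Theorem 2.1 in the discussion between Theorems 2.2 and 2.3) by realizing $\int_{\R^n} e^{-\phi(t,x)}\, dx$ as a limit of slice-volumes of a sequence of convex bodies whose ambient dimension is allowed to grow.

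For each integer $N \geq 1$, introduce the convex set
$$
\mathcal{A}_N := \bigl\{(t,x,y) \in \R \times \R^n \times \R^N : \tfrac{1}{N}\phi(t,x) + \|y\| \leq 1\bigr\},
$$
where $\|\cdot\|$ denotes the Euclidean norm on $\R^N$ and $c_N$ the volume of its unit ball. Convexity of $\mathcal{A}_N$ is immediate from convexity of $\phi$ on $\R^{1+n}$ and of $\|\cdot\|$ on $\R^N$. Slicing at a fixed $t$ and integrating first in $y$,
$$
|A_t^{(N)}| \;=\; c_N \int_{\R^n} \bigl(1 - \phi(t,x)/N\bigr)_+^{N}\, dx.
$$
The multiplicative form of Brunn--Minkowski applied to $\mathcal{A}_N$ (after exhausting, if necessary, by the bounded convex truncations $\mathcal{A}_N \cap \{|x|\leq R\}$ and letting $R \to \infty$) then tells me that
$$
t \;\longmapsto\; \log \int_{\R^n} \bigl(1 - \phi(t,x)/N\bigr)_+^{N}\, dx
$$
is concave on $\R$ for every fixed $N$.

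To conclude I let $N \to \infty$. The elementary inequality $1 + u \leq e^{u}$ applied with $u = -\phi(t,x)/N$ shows that $\bigl(1 - \phi(t,x)/N\bigr)_+^{N}$ is pointwise non-decreasing in $N$ with pointwise limit $e^{-\phi(t,x)}$, so monotone convergence yields
$$
\int_{\R^n} \bigl(1 - \phi(t,x)/N\bigr)_+^{N}\, dx \;\longrightarrow\; \int_{\R^n} e^{-\phi(t,x)}\, dx \;=\; e^{\tilde\phi(t)}
$$
for every $t$. Since a pointwise limit of concave functions is concave, the concavity of the approximants is inherited by $\tilde\phi$.

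The main obstacle is really the setup of $\mathcal{A}_N$: one has to spot that pairing $\phi/N$ with a convex-norm constraint on an auxiliary $\R^N$ produces, after the $y$-integration, exactly the classical approximation $(1-\phi/N)_+^{N}$ to $e^{-\phi}$. Technical loose ends---unboundedness of $\mathcal{A}_N$ (handled by the truncation above) and the possibility that $\tilde\phi(t) = +\infty$ for some $t$ (in which case the natural conventions still preserve concavity of the limit)---are routine.
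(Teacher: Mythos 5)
Your argument is correct, but it takes a genuinely different route from the one the paper intends: the paper states Pr\'ekopa's theorem without proof and explicitly singles out the Brascamp--Lieb proof via a weighted Poincar\'e inequality (the real-variable avatar of H\"ormander's $L^2$-estimate for $\dbar$) as ``the one that is most relevant,'' because that is the proof that survives the passage to the complex setting of Theorem 3.1. You instead reverse the logical direction of Section 2: rather than deducing the multiplicative Brunn--Minkowski statement from Pr\'ekopa (as the paper does by taking $\phi$ to be the convex indicator of $\A$), you deduce Pr\'ekopa from multiplicative Brunn--Minkowski by the classical dimension-lifting trick, encoding $e^{-\phi(t,x)}$ as the increasing limit of the $y$-fibre volumes $c_N\paren{1-\phi/N}_+^N$ of the convex sets $\mathcal{A}_N\subset\R^{1+n+N}$. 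The slice-volume computation, the convergence $\paren{1-\phi/N}_+^N\nearrow e^{-\phi}$ (note that the monotonicity in $N$ is a separate AM--GM fact, not a consequence of $1+u\leq e^u$, which only gives the upper envelope), and the stability of the concavity inequality under increasing pointwise limits are all in order; the truncation needed to reduce the unbounded set $\mathcal{A}_N$ to a convex body should be done in $t$ as well as in $x$ (concavity being a statement on compact $t$-intervals), and slices of zero or infinite volume cause no harm with the conventions $\log 0=-\infty$ and $\log\infty=+\infty$. What your approach buys is a completely elementary, purely measure-geometric proof requiring no analysis beyond Brunn--Minkowski itself; what it does not provide is the proof template the survey is building on, namely positivity of a curvature established through an $L^2$-estimate, which is what generalizes from convex weights on $\R^{n+1}$ to plurisubharmonic weights on $\Omega\times D$.
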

To get the multiplicative version of Theorem 2.2 from Pr\'ekopa's theorem one let's $\phi$ be the convex function that equals 0 on $\A$ and $\infty$ on the complement of $\A$. (The reader that does not like functions that take infinite values can instead use an appropriate limit of finite functions.)
Then $\tilde\phi(t)=\log|A_t|$ so $\log |A_t|$ is concave. 

Both the Brunn-Minkowski and Pr\'ekopa theorem have a variety of proofs, often pointing in different directions of generalizations. The proof of Pr\'ekopa's theorem that is most relevant for us is the one  by Brascamp and Lieb, \cite{Brascamp-Lieb}. Their proof is based on a weighted Poincar\'e inequality, with weight function $e^{-\phi}$. This weighted Poincar\'e inequality is actually a real variable version of H\"ormander's (\cite{Hormander}) $L^2$-estimate for the $\dbar$-equation, cf \cite{Dario}. It is therefore natural to ask which, if any,  inequalities in the complex domain that correspond to Pr\'ekopa's inequality. This question was discussed in \cite{Berndtsson}, \cite{2Berndtsson} and \cite{3Berndtsson}, and in the next section we will give a very brief account of this. 
\section{ A complex variant of the Brunn-Minkowski theorem}

We shall now describe the simplest version  of the results in \cite{2Berndtsson}. Let $D$ be a pseudoconvex domain in $\C^n_z$ and let $\Omega$ be a domain in $\C_t$. If $\phi(t,z)$ is a plurisubharmonic function in $\D:=\Omega\times D$ we consider for each fixed $t$ in $\Omega$ the Bergman space
$$
A^2_t:=\{h\in H(D); \int_D |h(z)|^2 e^{-\phi(t,z)} d\lambda(z)<\infty\},
$$
equipped with the Bergman norm
$$
\|h\|_t^2=\int_D |h(z)|^2 e^{-\phi(t,z)} d\lambda(z).
$$
If we put an appropriate restriction on the growth of $\phi$, like
$$
|\phi(t,z)-\psi(z)|\leq C(t),
$$
for some fixed function $\psi$ and function $C(t)$, then all the Bergman spaces $A^2_t=A^2$ are the same as sets, but their norms vary with $t$. They therefore together make up a trivial vector bundle
$$
E:=\Omega\times A^2,
$$
with an hermitian norm $\|\cdot\|_t$. This is thus an hermitian holomorphic vector bundle, and although it has in general infinite rank it has a Chern connection and a curvature, $\Theta^E$,  as in the finite rank case, see \cite{2Berndtsson}.
\begin{thm}
The curvature of $E$ is positive (in the nonstrict sense).
\end{thm}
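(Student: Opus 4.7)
The plan is to establish $\langle\Theta^E_{t\bar t}u,u\rangle_{t_0}\geq 0$ for every holomorphic section $u$ of $E$ and every base point $t_0\in\Omega$; after a standard regularisation (convolve $\phi$ with a smoothing kernel and add $\varepsilon(|t|^2+|z|^2)$) it suffices to treat the case of $\phi$ smooth and strictly plurisubharmonic on $\D$. The entire argument hinges on a clean representation of the curvature in terms of a Bergman projection and then on H\"ormander's $L^2$-estimate for $\bar\partial$.

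The first preparatory step is to extend the fibre vector $u_0:=u(t_0)\in A^2_{t_0}$ to a \emph{minimal} holomorphic section, i.e.\ one satisfying $D^{1,0}_tu(t_0)=0$. The characterization $\partial_t\langle u,v\rangle_t=\langle D^{1,0}_tu,v\rangle_t$ of the Chern connection, valid for all holomorphic $v$, translates minimality into the concrete prescription $\partial_tu(t_0,\cdot)=P_{t_0}(\phi_tu_0)$, where $P_{t_0}$ is the Bergman projection of $L^2(D,e^{-\phi(t_0,\cdot)})$ onto $A^2_{t_0}$; a holomorphic $u(t,z)$ with these first two $t$-Taylor coefficients at $t_0$ is immediately available. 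Writing $w:=P_{t_0}(\phi_tu_0)$, differentiating $\|u\|_t^2=\int_D|u|^2 e^{-\phi}\,d\lambda$ twice, using $\bar\partial_tu=0$, and simplifying the cross terms via the projection identity $\langle\phi_tu_0,w\rangle=\|w\|^2$, the computation collapses to
\[
\partial_t\partial_{\bar t}\|u\|_t^2\Big|_{t_0}=\|\phi_tu_0\|^2-\|w\|^2-\int_D\phi_{t\bar t}|u_0|^2 e^{-\phi}\,d\lambda.
\]
Combining with the general Chern identity $\partial_t\partial_{\bar t}\|u\|^2=\|D^{1,0}_tu\|^2-\langle\Theta_{t\bar t}u,u\rangle$ and the vanishing $D^{1,0}_tu(t_0)=0$, and letting $\xi:=\phi_tu_0-w\in(A^2_{t_0})^\perp$ so that $\|\phi_tu_0\|^2-\|w\|^2=\|\xi\|^2$, I arrive at the central formula
\[
\langle\Theta^E_{t\bar t}u,u\rangle_{t_0}=\int_D\phi_{t\bar t}|u_0|^2 e^{-\phi}\,d\lambda-\|\xi\|^2.
\]

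The decisive step is the estimation of $\|\xi\|^2$. Because $u_0$ is holomorphic in $z$ and $w\in A^2_{t_0}$, $\xi$ solves $\bar\partial_z\xi=u_0\,\bar\partial_z\phi_t$ on the pseudoconvex domain $D$ with weight $e^{-\phi(t_0,\cdot)}$, and orthogonality to $A^2_{t_0}$ singles it out as the $L^2$-minimal solution. H\"ormander's $L^2$-estimate then yields
\[
\|\xi\|^2\leq\int_D|u_0|^2\sum_{j,k}\phi^{j\bar k}\phi_{t\bar z_k}\phi_{z_j\bar t}\,e^{-\phi}\,d\lambda,
\]
where $(\phi^{j\bar k})$ denotes the inverse of the strictly positive matrix $(\phi_{z_j\bar z_k})$. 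Plurisubharmonicity of $\phi$ on $\D=\Omega\times D$ means the entire $(n{+}1)\times(n{+}1)$ complex Hessian in the variables $(t,z_1,\dots,z_n)$ is positive semi-definite; the Schur complement of its $(z,\bar z)$-block yields pointwise
\[
\phi_{t\bar t}\geq\sum_{j,k}\phi^{j\bar k}\phi_{t\bar z_k}\phi_{z_j\bar t},
\]
and integration closes the argument. The principal obstacle is less computational than conceptual: one must organize the infinite-rank curvature calculation so that the geometric content of plurisubharmonicity of $\phi$ on the \emph{product} matches, via H\"ormander together with Schur complement, exactly the mass term $\int\phi_{t\bar t}|u_0|^2 e^{-\phi}$; the minimality trick is what puts the curvature into the form where this matching becomes visible.
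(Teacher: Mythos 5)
Your argument is correct and is precisely the proof the paper alludes to (and refers to \cite{2Berndtsson} for): compute $\partial_t\partial_{\bar t}\|u\|_t^2$ for a section normalized so that $D^{1,0}_tu(t_0)=0$, identify the error term as the norm of the minimal solution of $\bar\partial_z\xi=u_0\bar\partial_z\phi_t$, and absorb it into $\int\phi_{t\bar t}|u_0|^2e^{-\phi}$ via H\"ormander's estimate and the Schur-complement consequence of plurisubharmonicity of $\phi$ on the product. The curvature formula, the role of the Bergman projection, and the regularization/exhaustion reduction all match the original argument, so there is nothing to add.
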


To understand the meaning of this statement it is not absolutely necessary to resort to the technical definition of curvature and its extension to bundles of infinite rank - although there are explicit formulas for the curvature that are sometimes of interest, see \cite{3Berndtsson}. Recall that for vector bundles of finite rank, the curvature is negative if and only if the logarithm of the norm of any holomorphic section is (pluri)subharmonic. This can be taken as the definition of negative curvature also for bundles of infinite rank, and then we say that a bundle has positive curvature if its dual has negative curvature. For the bundle $E$ above we can e g  construct holomorphic sections of its dual in the following way. Let 
$$
t\to f(t)
$$ 
be a holomorphic map from $\Omega$ to $D$. For each $t$ we then let $\xi(t)\in E_t^*$ be defined as evaluation at $f(t)$, so that 
$$
\langle \xi(t),h\rangle= h(f(t))
$$
if $h$ is in $A^2$. Clearly this defines a holomorphic section of $E^*$. The squared norm of the evaluation functional at a point $z$ for the norm on $A^2_t$ is by definition $B_t(z)$,  the (diagonal) Bergman kernel for $A^2_t$ at $z$, so 
$$
\|\xi(t)\|^2_t =B_t(f(t)).
$$
Hence Theorem 3.1 implies in particular that $\log B_t(f(t))$ is subharmonic for any holomorphic map $f$, so 
$$
\log B_t(z),
$$
 is plurisubharmonic on $\Omega\times D$ (see \cite{Berndtsson} for a more general statement). 

For line bundles it is also true that the logarithm of a holomorphic nonvanishing section is (pluri)superharmonic if the curvature is positive, but we stress that this does not hold for bundles of higher rank. Thus we have no direct statements about functions like 
$$
t\to \log\int_B |h(z)|^2  e^{-\phi(t,z)}d\lambda(z)
$$
(with $h$ holomorphic), except in special cases when the rank of the bundle is one. This actually happens in some cases. If $D=\C^n$ and $\phi(t,z)$ grows like $(n+1)\log(1+|z|^2)$ at infinity, then $A^2$ contains only constants, and we can conclude that
$$
\tilde\phi:=-\log\int_B  e^{-\phi(t,z)}d\lambda(z)
$$
is subharmonic. This is of course in close  analogy with Theorem 2.3. On the other hand, if $\phi$ does not satisfy such a bound, $\tilde\phi$ is not necessarily subharmonic, as can be see from the simple example $\phi(t,z)=|z-\bar t|^2-|t|^2=|z|^2-2\Re tz$, cf \cite{Kiselman}. Then $\tilde\phi= c_n -|t|^2$ is not subharmonic. 

Theorem 3.1 is the simplest version of what we here call 'complex Brunn-Minkowski' theorems. There are many variants of the setting and the result, the most general involving fibrations of K\"ahler manifolds, holomorphic $n$-forms and holomorphic line bundles with positively curved metrics. For our purposes here Theorem 3.1 is enough and we refer to \cite{2Berndtsson} and \cite{3Berndtsson} for proofs and generalizations. Let us just mention here, in order to make a first contact with section 2, that Theorem 3.1 can be proved by computing
$$
i\ddbar_t \|h\|^2_t
$$ 
and applying H\"ormander's theorem in a way quite similar to how Brascamp and Lieb proved Pr\'ekopa's theorem. 

On a formal level, the analogy between Theorem 3.1 and Theorem 2.3 is that we have replaced the convex function in Theorem 2.3 by a plurisubharmonic function, and the constant function 1 in Pr\'ekopa's theorem by a holomorphic function $h$. Although the statement of Theorem 3.1 has nothing to do with volumes of sets, it turns out that it can be seen as a stronger version of Theorem 2.3 and implies Theorem 2.3 as a special case. To see this we shall  apply Theorem 3.1 when $D$ and $\phi$ have some symmetry properties.

We first look at the case when $D$ and $\phi(t, \cdot)$ are invariant under the natural $S^1$-action on $\C^n$
$$
(z_1, ...z_n)\to (e^{i\theta}z_1, ...e^{i\theta}z_n):=s_\theta z.
$$
Thus we assume that $D$ is invariant under $s_\theta$ for all real $\theta$ and that for each $t$ , $\phi(t, s_\theta z)=\phi(t, z)$. First we moreover assume that $D$ is even closed under the maps $z\to \lambda z$, if $|\lambda|\leq 1$. Then $D$ can be written as 
$$
D=\{z; \psi(z)<0\},
$$
for some $\psi$,  plurisubharmonic in all of $\C^n$, that is logarithmically homogenous, i e 
$\psi(\lambda z)=\psi(z)+\log|\lambda|$ if $\lambda$ is a nonzero complex number. In particular, $D$ is Runge, so the polynomials are dense in $A^2$. 

Hence  our Bergman space $A^2$ splits as a direct sum 
$$
A^2=\bigoplus_0^\infty H_m
$$
where $H_m$ is the space of polynomials, and these spaces are orthogonal for all the norms $\|\cdot\|_t$. Therefore  we also get an orthogonal  splitting 
$$
E=\bigoplus_0^\infty E_m
$$
of the hermitian vector bundle $E$. 
Since $E$ has positive curvature it follows that all the subbundles $E_m$ are also positively curved ( see e g \cite{2Raufi} for more on this). 
In particular $E_0$ is positively curved. The fibers of $E_0$ consist of polynomials of degree zero, i e constants, so $E_0$ is a line bundle with the constant function 1 as trivializing section. Since 
$$
\|1\|_t^2 =\int_D e^{-\phi(t, z) }d\lambda(z),
$$
the positivity of $E_0$ means that 
$$
t\to \log \int_D e^{-\phi(t, z) }d\lambda(z)
$$
is superharmonic, which is a(nother)  complex version of Pr\'ekopa's theorem. It is only the positivity at the level $m=0$ that gives Pr\'ekopa-like statements, higher degrees give corresponding statements for matrices $M=(M_{\alpha, \beta})(t)$ where
$$
M_{\alpha,\beta}(t)= \int_D z^\alpha\bar z^\beta e^{-\phi(t, z) }d\lambda(z)
$$
where $|\alpha|=|\beta|=m$. More precisely, we see that
$$
\Theta_m:=i\dbar M^{-1}\partial M\geq 0
$$
as a curvature operator. 

In a similar way, the usual Pr\'ekopa theorem follows from Theorem 3.1 when we assume full toric symmetry. We only sketch this and refer to an article by Raufi, \cite{2Raufi} where this is explained and used to get a matrixvalued Pr\'ekopa theorem. We then assume that both $D$ and $\phi$ are invariant under the full torus action
$$
z\to (e^{i\theta_1}z_1,... e^{i\theta_n}z_n).
$$
Then $\phi$ depends only on $r_j=|z_j|$ and is a convex function of $\log r_j$, and  $D=T^n\times D_{\R}$ where $D_{\R}$ is logarithically convex.
The orthogonal splitting
$$
A^2=\bigoplus H_\alpha,
$$
where the sum is now over all multiindices $\alpha$, then gives Pr\'ekopa's theorem after a logarithmic change of variables. Here all multiindices $\alpha$ give the same information; the change in $\alpha$ just means that the weight function changes by a linear term. 

Even in this case of full toric symmetry, Theorem 3.1 is a bit more general than Pr\'ekopa's theorem, since we do not need to assume any symmetry in $t$:
\begin{thm} Let $\Phi(t,x_1, ...x_n)=\Phi(t, z_1, ...z_n)$ be plurisubharmonic in $\Omega\times V_\R\times{i\R^n}$ and independent of the imaginary part of $z$. Then 
$$
\tilde\Phi(t):=-\log\int_{V_\R} e^{-\Phi(t,x_1, ...x_n)} dx
$$
is subharmonic in $\Omega$. In particular, if $\Phi$ is also independent of $\Im t$, then $\tilde\Phi$ is convex.
\end{thm}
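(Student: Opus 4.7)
The approach is to reduce Theorem~3.2 to Theorem~3.1 in a torus-invariant setting, exactly along the lines of the discussion immediately preceding the statement, but this time isolating the piece of the Bergman bundle corresponding to the Laurent monomial $w^{(-1,\dots,-1)}$ rather than the constant function. First I would substitute $w_j=e^{z_j}$, so that $\log|w_j|=\Re z_j=x_j$, and work on the Reinhardt domain
$$
D:=\{w\in(\C^*)^n:(\log|w_1|,\dots,\log|w_n|)\in V_\R\}.
$$
Since $\Phi$ is plurisubharmonic and independent of $\Im z$, it is convex in $x$; hence $V_\R$ is convex and $D$ is pseudoconvex. Set
$$
\phi(t,w):=\Phi(t,\log|w_1|,\dots,\log|w_n|).
$$
A short chain-rule computation, using that each $\log|w_j|$ is pluriharmonic on $\C^*$, shows the complex Hessian of $\phi$ at $(t,w)$ applied to $(\tau,\xi)$ equals the complex Hessian of $\Phi$ at $(t,x)$ applied to $(\tau,\xi_1/w_1,\dots,\xi_n/w_n)$, so $\phi$ is plurisubharmonic on $\Omega\times D$.

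Next, I would apply Theorem~3.1 to obtain positive curvature of the Bergman bundle $E$ over $\Omega$ whose fiber is $A^2(D,e^{-\phi(t,\cdot)})$. Because the full torus action preserves both $D$ and $\phi$, the Bergman space splits orthogonally as a Hilbert direct sum of the one-dimensional subspaces $\C\cdot w^\alpha$ indexed by $\alpha\in\Z^n$ (Laurent monomials are available since $D\subset(\C^*)^n$); this decomposition is holomorphic, so each line subbundle $E_\alpha$ inherits positive curvature. Passing to polar coordinates $w_j=r_je^{i\theta_j}$ and then substituting $x_j=\log r_j$ (with $dr_j=r_j\,dx_j$) gives
$$
\|w^\alpha\|_t^2=(2\pi)^n\int_{V_\R}\exp\!\Bigl(-\Phi(t,x)+2\sum_j(\alpha_j+1)x_j\Bigr)\,dx.
$$
The Jacobian cancels precisely when $\alpha=(-1,\dots,-1)$, leaving $\|w^{-\mathbf 1}\|_t^2=(2\pi)^n\,e^{-\tilde\Phi(t)}$. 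Positivity of the line bundle $E_{-\mathbf 1}$ means exactly that $-\log$ of the norm of its nonvanishing holomorphic section $w^{-\mathbf 1}$ is subharmonic in $t$, which is the desired subharmonicity of $\tilde\Phi$ (up to an additive constant).

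For the ``in particular'' statement, if $\Phi$ is also independent of $\Im t$ then so is $\tilde\Phi$, and a subharmonic function on a planar domain depending only on $\Re t$ satisfies $\tilde\Phi''(\Re t)=\Delta\tilde\Phi\ge 0$ and is therefore convex. The main obstacle I anticipate is not geometric but organizational: checking that the Laurent-monomial splitting really defines a holomorphic orthogonal decomposition of $E$ to which the curvature restricts summand-by-summand (Raufi's treatment in \cite{2Raufi} is the natural reference), and, if necessary, arranging the background growth hypothesis of Theorem~3.1 by localizing in $\Omega$ or truncating $V_\R$ and passing to the limit. The decisive observation is the shift by $-\mathbf 1$, which absorbs the two Jacobians coming from $d\lambda=\prod r_j\,dr_j\,d\theta_j$ and $dr_j=e^{x_j}dx_j$ and converts positivity of one Laurent summand of $E$ directly into the Pr\'ekopa-type inequality for $\tilde\Phi$.
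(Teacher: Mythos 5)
Your proposal is correct and follows essentially the same route as the paper, which likewise deduces Theorem~3.2 from Theorem~3.1 via full toric symmetry, the orthogonal splitting $A^2=\bigoplus H_\alpha$ into monomial line subbundles, and a logarithmic change of variables. The only cosmetic difference is that you single out the Laurent index $\alpha=(-1,\dots,-1)$ to cancel the Jacobians exactly, whereas the paper observes that every $\alpha$ gives the same statement since changing $\alpha$ merely shifts the weight by a linear term.
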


In this sense one can perhaps say that the classical Brunn-Minkowski theorem is the special case of the complex theorem when we have maximal symmetry. In another direction Theorem 3.2 can be seen as a generalization of a well known result of Kiselman, \cite{Kiselman}. 
\begin{thm}{\bf(Kiselman)} Under the same assumptions as in Theorem 3.2, let
$$
\hat\Phi(t):=\inf_{x\in V_\R}\Phi(t,x).
$$
Then $\hat\Phi$ is subharmonic.
\end{thm}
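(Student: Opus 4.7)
The plan is to realize $\hat\Phi$ as a limit of subharmonic functions supplied by Theorem 3.2, using Laplace's principle
$$\inf_x f(x) = \lim_{k \to \infty} -\tfrac{1}{k}\log \int e^{-kf(x)}\,dx.$$

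To execute this, I would first apply Theorem 3.2 to the plurisubharmonic function
$$\Psi_k(t, z) := k\Phi(t, z) + |\Re z|^2, \qquad k > 0.$$
Since $|\Re z|^2$ is a convex function of $\Re z$, it is plurisubharmonic in $z$ and independent of $\Im z$, so $\Psi_k$ still satisfies the hypotheses of Theorem 3.2. Dividing the resulting subharmonic function by $k > 0$, one obtains that
$$F_k(t) := -\tfrac{1}{k}\log\int_{V_\R} e^{-k\Phi(t, x) - |x|^2}\,dx$$
is subharmonic on $\Omega$ for each $k > 0$; the Gaussian factor $e^{-|x|^2}$ ensures that the integral converges.

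Next, standard Laplace asymptotics show that $F_k(t) \to \hat\Phi(t)$ pointwise as $k \to \infty$. From the trivial bound $e^{-k\Phi(t, x)} \leq e^{-k\hat\Phi(t)}$ one gets $F_k \geq \hat\Phi - \tfrac{n}{2k}\log\pi$, while the upper semicontinuity of $\Phi(t, \cdot)$ at a near-minimizer supplies a small ball on which $\Phi(t, \cdot) \leq \hat\Phi(t) + \eta$, yielding $F_k \leq \hat\Phi + \eta + O_\eta(1/k)$ for every $\eta > 0$. Letting $k \to \infty$ in the sub-mean-value inequality for $F_k$, and invoking dominated convergence together with the upper semicontinuity of $\hat\Phi$ (which is immediate since an infimum of USC functions is USC), yields the sub-mean-value inequality for $\hat\Phi$ and hence its subharmonicity.

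The main obstacle is legitimating the exchange of the $k$-limit with the circular integral: since the convergence $F_k \to \hat\Phi$ need not be monotone in $k$, a locally uniform upper bound on $F_k$ along circles in $\Omega$ is needed. A robust workaround is to first establish the theorem for the coercive perturbation $\Phi_\epsilon(t, x) := \Phi(t, x) + \epsilon |x|^2$, for which all Laplace estimates can be made locally uniform, and then send $\epsilon \to 0^+$: since $\Phi_\epsilon \searrow \Phi$ pointwise, the corresponding $\hat\Phi_\epsilon$ decreases to $\hat\Phi$, and the decreasing pointwise limit of subharmonic functions is itself subharmonic.
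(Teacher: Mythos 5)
Your proposal is correct and is essentially the paper's own argument: the paper proves Kiselman's theorem precisely by the Laplace principle, writing $\hat\Phi=\lim_{p\to\infty}\widetilde{(p\Phi)}/p$ and invoking Theorem 3.2 for each $p$. Your Gaussian regularization, the locally uniform upper bound on the $F_k$, and the fallback via the coercive perturbation $\Phi+\epsilon|x|^2$ with decreasing limits are exactly the technical details the paper's one-line remark leaves implicit, and they are handled correctly.
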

(This follows from Theorem 3.2 since $\hat\Phi=\lim_{p\to \infty} \widehat{(p\Phi)}/p$.)

\section{The openness problem}
We now return to the openness problem. We have given a plurisubharmonic function $u$ in the ball, which we assume to be negative and such that
$$
\int_B e^{-u} <\infty,
$$
where $B$ is the unit ball. For any $s>0$ we let $u_s=\max(u+s,o)=\max(u,-s)+s$. We also exend this to when $s$ is complex with $\Re s>0$ by putting $u_s=u_{\Re s}$. Then $u(s,z)=u_s(z)$ is plurisubharmonic on $\Omega\times B$, with $\Omega$ being the halfplane. 
Obviously $0\leq u_s\leq s$, so $u_s$ stays uniformly bounded for $s$ bounded. Let, for $h$ holomorphic and square integrable in the unit ball
$$
\|h\|^2_s:= \int_B |h|^2 e^{-2u_s}
$$
(note the factor 2 in the exponent!). Then $\|h\|_0$ is the standard unweighted $L^2$-norm and for  $s$ in a bounded set $\|h\|_s$ is equivalent to $\|h\|_0$. The next proposition says in particular that we can express the norm
$$
\int_B |h|^2 e^{-u}
$$
in terms of $\|h\|_s$.
\begin{prop} Assume $u < 0$ and $0<p<2$. Then for $h$ square integrable in $B$
$$
\int_B |h|^2 e^{-pu}= a_p\int_0^{\infty} e^{ps}  \|h\|^2_sds +b_p\|h\|^2_0
$$
for $a_p$ and $b_p$ suitable positive constants.
\end{prop}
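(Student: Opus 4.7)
The proof reduces to a pointwise (in $z$) computation followed by Fubini/Tonelli. My plan is to fix $z\in B$, write $v=u(z)\le 0$, and evaluate the inner integral $\int_0^\infty e^{ps}e^{-2u_s(z)}ds$ explicitly. Since $u_s(z)=\max(v+s,0)$, we have $e^{-2u_s(z)}=1$ for $s\le -v$ and $e^{-2u_s(z)}=e^{-2v-2s}$ for $s\ge -v$. Splitting the integral at $s=-v$, the first piece equals $(e^{-pv}-1)/p$ and the second, using $p<2$ for convergence at infinity, equals $e^{-pv}/(2-p)$. Summing gives the pointwise identity
$$
\int_0^\infty e^{ps}e^{-2u_s(z)}\,ds \;=\; \Bigl(\tfrac1p+\tfrac1{2-p}\Bigr)e^{-pu(z)}-\tfrac1p \;=\; \frac{2}{p(2-p)}e^{-pu(z)}-\tfrac1p.
$$

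Next I multiply by $|h(z)|^2$ and integrate over $B$. Since everything is nonnegative, Tonelli's theorem justifies interchanging the $s$- and $z$-integrations on the left, yielding
$$
\int_0^\infty e^{ps}\|h\|_s^2\,ds \;=\; \frac{2}{p(2-p)}\int_B|h|^2 e^{-pu}\;-\;\frac{1}{p}\int_B|h|^2.
$$
Because $u\le 0$, we have $u_0=\max(u,0)=0$, hence $\|h\|_0^2=\int_B|h|^2$. Solving the displayed identity for $\int_B |h|^2 e^{-pu}$ gives the claimed formula with the explicit constants
$$
a_p=\frac{p(2-p)}{2},\qquad b_p=\frac{2-p}{2},
$$
both strictly positive on $0<p<2$.

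There is essentially no obstacle here: the only points requiring care are the convergence of the second piece of the pointwise integral, which is exactly where the hypothesis $p<2$ enters, and the nonnegativity of the integrand, which allows Tonelli without any further integrability assumption on $h$ (if $\int_B|h|^2e^{-pu}=\infty$, the identity still holds in $[0,\infty]$, which is the form in which it will be used).
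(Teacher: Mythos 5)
Your proof is correct and is essentially identical to the paper's: both evaluate the pointwise integral $\int_0^\infty e^{ps}e^{-2\max(u(z)+s,0)}\,ds$ by splitting at $s=-u(z)$, obtaining $C_p e^{-pu(z)}-1/p$ with $C_p=\tfrac{2}{p(2-p)}$, and then integrate against $|h|^2$ over $B$. The only difference is that you make the constants $a_p$, $b_p$ and the Tonelli step explicit, which the paper leaves implicit.
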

\begin{proof} First note that if $x<0$
$$
\int_0^{\infty} e^{ps}e^{-2\max(x+s,0)}ds=\int_0^{-x} e^{ps}ds +\int_{-x}^{\infty} e^{-2x} e^{(p-2)s}ds= C_p e^{-px}-1/p.
$$
Applying this with $x=u$ we find that
$$
C_p\int_B |h|^2 e^{-pu}=\int_0^{\infty} e^{ps} \|h\|^2_s ds +(1/p)\int_B |h|^2,
$$
which proves the proposition. 
\end{proof}

The moral of Proposition 4.1 is that we have translated questions about the norm of a scalar valued (holomorphic) function $h$ over an $n$-dimensional domain $B$, to questions about the norm of a vector ($A^2$) valued (constant) function over the interval $(0,\infty)$. These norms $\|h\|_s$ depend only on $\Re s$ and enjoy a certain convexity property by Theorem 3.1, and we shall see how this reduces the openness problem to a problem about integrability of convex functions. The next very simple proposition illustrates the idea. 
\begin{prop}
Let $k(s)$ be a convex function on $[0,\infty)$. Then 
$$
\int _0^{\infty} e^{-k(s)} ds<\infty
$$
if and only if  
$$
\lim_{s\to\infty} k(s)/s >0.
$$
\end{prop}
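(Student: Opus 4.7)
The plan is to exploit the elementary monotonicity of chord slopes for convex functions: for any base point $s_0$ at which $k$ is finite, the map $s \mapsto (k(s)-k(s_0))/(s-s_0)$ is nondecreasing on $(s_0,\infty)$. This gives two useful consequences at once. First, the limit $L := \lim_{s\to\infty} k(s)/s$ exists in $(-\infty,+\infty]$ and agrees with the limit of the chord slope. Second, since the chord slope is dominated by its supremum,
\[
k(s) \;\le\; k(s_0) + L(s-s_0) \qquad \text{for all } s \ge s_0.
\]

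For the implication $L>0 \Rightarrow \int_0^{\infty} e^{-k(s)}\,ds < \infty$, I would fix any $\epsilon \in (0,L)$ and use the definition of the limit to produce $s_1$ with $k(s) \ge \epsilon s$ for $s \ge s_1$. This yields exponential decay on $[s_1,\infty)$ and hence integrability there; on the compact remainder $[0,s_1]$ the function $e^{-k}$ is bounded above by a finite constant (convex functions being locally bounded on the interior of their effective domain), so its integral is finite.

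For the converse, assume $L \le 0$ and pick any $s_0$ with $k(s_0) < \infty$. The linear upper bound then gives $k(s) \le k(s_0) + L(s-s_0) \le k(s_0) - L s_0 =: M$ for every $s \ge s_0$, using $Ls \le 0$. Hence $e^{-k(s)} \ge e^{-M} > 0$ on $[s_0,\infty)$ and the integral clearly diverges. The only degenerate case is $k \equiv +\infty$, where both sides of the equivalence are trivially satisfied. There is no real obstacle in this proof — the content is entirely the monotone-slope lemma for convex functions, and the mild care needed at the boundary point $s=0$ is harmless since the statement concerns an integral and the tail behavior at infinity.
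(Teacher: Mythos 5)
Your proposal is correct and follows essentially the same route as the paper: both arguments rest on the monotonicity of the chord slope of a convex function, deducing that the limit $L$ of $k(s)/s$ exists, that $L\le 0$ forces $k$ to be bounded above on a half-line (so the integral diverges), and that $L>0$ gives eventual linear growth $k(s)\ge \epsilon s$ (so the integral converges). The paper merely normalizes $k(0)=0$ so that the chord slope from the origin is $k(s)/s$ itself, whereas you work from a general base point $s_0$; the content is identical.
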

\begin{proof} We may of course assume that $k(0)=0$. Then $k(s)/s$ is increasing so its limit at infinity exists. If the limit is smaller than or equal to zero, then $k(s)\leq 0$ for all $s$, so the integral cannot converge. The other direction is obvious.
\end{proof}

It follows trivially from Proposition 4.2 that if 
$$
\int_0^{\infty}e^{s-k(s)} ds<\infty,
$$
then for some $p>1$
$$
\int_0^{\infty}e^{ps-k(s)} ds<\infty.
$$
In view of Proposition 4.1 this is a version of the openness statement for onedimensional spaces.  
The next theorem is a vector valued analog of Proposition 4.2.

\begin{thm} Let $H_0$ be a (separable) Hilbert space equipped with a  family of equivalent Hilbert norms $\|\cdot\|_s$ for $\Re s\geq 0$.  Assume these norms depend only on the real part of $s$ and define a hermitian metric on the trivial bundle $\Omega\times H_0$, where $\Omega$ is the right half plane, of positive curvature.  Let $H$ be the subspace of $H_0$ of elements $h$ such that
$$
\|h\|^2:=\int_0^\infty e^s \|h\|^2_s ds<\infty.
$$
Then, for any $h$ in $H$,  $\epsilon>0$  and $s>1/\epsilon$ there is an element $h_s$ in $H_0$ such that
\be
\|h-h_s\|^2_0\leq 2\epsilon \|h\|^2,
\ee
 and
\be
\|h_s\|^2_s\leq e^{-(1+\epsilon)s}\|h\|^2_0.
\ee
\end{thm}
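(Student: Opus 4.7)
My plan is to reduce the conclusion to a Hahn-Banach separation problem in $H_0$ and then extract the needed dual inequality from the log-convexity of dual norms that is implied by positive curvature.

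First I would observe that the existence of an $h_s$ satisfying both estimates is equivalent to the non-emptiness of the intersection of the two closed, bounded, convex sets
\[
B_1 := \{v \in H_0 : \|v\|_s \leq e^{-(1+\epsilon)s/2}\|h\|_0\}, \qquad
B_2 := \{v \in H_0 : \|v - h\|_0 \leq \sqrt{2\epsilon}\,\|h\|\},
\]
in the Hilbert space $(H_0, \|\cdot\|_0)$. Since the support functions are $\sup_{B_1}\ell = e^{-(1+\epsilon)s/2}\|h\|_0\|\ell\|_s^{*}$ and $\inf_{B_2}\ell = \ell(h) - \sqrt{2\epsilon}\|h\|\|\ell\|_0^{*}$, the Hahn-Banach strict-separation theorem for disjoint closed bounded convex sets in a Hilbert space reduces the problem to showing that for every continuous linear functional $\ell$ on $H_0$,
\[
|\ell(h)| \leq e^{-(1+\epsilon)s/2}\|h\|_0\|\ell\|_s^{*} + \sqrt{2\epsilon}\,\|h\|\|\ell\|_0^{*}. \qquad (\star)
\]

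Next I would bring in positive curvature. Because $\Omega \times H_0$ has positive curvature, the dual bundle $\Omega \times H_0^{*}$ has negative curvature, so for every $\ell \in H_0^{*}$, viewed as a constant (hence holomorphic) section of the dual bundle, the function $s \mapsto \log\|\ell\|_s^{*2}$ is plurisubharmonic on $\Omega$. Since it depends only on $x = \Re s$, it is convex in $x$.

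To prove $(\star)$ I would split into cases on the slope $\gamma := \log(\|\ell\|_s^{*}/\|\ell\|_0^{*})$. If $\gamma \geq (1+\epsilon)s/2$, the first term on the right-hand side of $(\star)$ is already at least $\|\ell\|_0^{*}\|h\|_0 \geq |\ell(h)|$, so $(\star)$ holds trivially. If $\gamma < (1+\epsilon)s/2$, convexity of $\log\|\ell\|_x^{*}$ on $[0,s]$ together with the endpoint values gives $\|\ell\|_x^{*} \leq \|\ell\|_0^{*}\, e^{(1+\epsilon)x/2}$ for all $x \in [0,s]$. Squaring the duality bound $|\ell(h)| \leq \|\ell\|_x^{*}\|h\|_x$ yields $|\ell(h)|^2 e^{-\epsilon x} \leq \|\ell\|_0^{*2}\, e^x\|h\|_x^2$, and integrating over $[0,s]$ gives
\[
|\ell(h)|^2 \cdot \frac{1 - e^{-\epsilon s}}{\epsilon} \leq \|\ell\|_0^{*2}\int_0^s e^x\|h\|_x^2\, dx \leq \|\ell\|_0^{*2}\|h\|^2.
\]
Since $s > 1/\epsilon$ forces $1 - e^{-\epsilon s} > 1 - e^{-1} > 1/2$, we deduce $|\ell(h)|^2 \leq 2\epsilon\|\ell\|_0^{*2}\|h\|^2$, which is absorbed by the second term of $(\star)$.

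The main conceptual obstacle is to recognize that positive curvature of the Hilbert bundle enters only through the elementary scalar log-convexity of $x \mapsto \log\|\ell\|_x^{*}$ for each fixed $\ell$, rather than through any Hörmander-type $\bar\partial$-estimate on the bundle itself. Once this simplification is recognized, the remaining proof is routine Hahn-Banach plus one-variable convex analysis.
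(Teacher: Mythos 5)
Your proof is correct, and it takes a genuinely different route from the paper's. The paper diagonalizes the single operator $T_s$ via the spectral theorem, defines $h_s$ explicitly as the spectral cut-off $\chi_{\{\lambda>1+\epsilon\}}h$, and then proves (4.1) by comparing the positively curved metric on the strip $0\le \Re t\le s$ with the flat interpolating metric $|h|_t^2=\int|h|^2e^{-\Re t\,\lambda}d\mu$, via a minimum principle that is itself derived from subharmonicity of $\log$ of norms of holomorphic sections of the dual bundle. You instead make no explicit construction at all: you recast the two estimates as the non-emptiness of the intersection of two balls (for the norms $\|\cdot\|_s$ and $\|\cdot\|_0$ respectively), which by weak compactness and Hahn--Banach strict separation in the Hilbert space reduces to the scalar inequality $(\star)$, and you verify $(\star)$ using only the convexity of $x\mapsto\log\|\ell\|^{*}_x$ for a constant dual section $\ell$ --- exactly the paper's working definition of positive curvature of $E$ --- together with the duality bound $|\ell(h)|\le\|\ell\|_x^{*}\|h\|_x$ integrated against $e^x\,dx$; your case split on the slope $\gamma$ and the numerics ($1-e^{-1}>1/2$ for $s>1/\epsilon$) check out. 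What the paper's argument buys is a canonical, explicit $h_s$ (a spectral projection of $h$, orthogonal to the remainder in every interpolating metric), which makes both estimates transparent; what yours buys is the elimination of both the spectral theorem and the minimum-principle lemma, a cleaner isolation of exactly how positivity enters (only through log-subharmonicity of dual norms of constant sections), and a non-constructive argument that would survive in settings where the norms $\|\cdot\|_s$ are not induced by a single self-adjoint operator or where no spectral decomposition is available. Both proofs consume the same amount of positivity, so neither is more general in that respect.
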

\begin{proof} Take $\epsilon>0$ and $s>1/\epsilon$. By assumption there is a bounded linear operator $T_s$ on $H_0$ such that
$$
\langle u,v\rangle_s=\langle T_su,v\rangle_0.
$$
By the spectral theorem (see \cite{Reed-Simon}) we can realize our Hilbert space $H_0$ as an $L^2$-space over a measure space $X$,  with respect to some positive measure $d\mu$, in such a way that 
$$
\|h\|^2_0=\int_X |h|^2 d\mu(x)
$$
and 
$$
\|h\|^2_s=\int_X |h|^2 e^{-s\lambda(x)} d\mu(x).
$$
We define $h_s$ by $h_s=\chi(x)h$, where $\chi$ is the characteristic function of the set $\lambda>(1+\epsilon)$. Let $r_s=h-h_s$. Clearly
$$
\|h_s\|^2_s=\int_{\lambda>1+\epsilon}|h|^2 e^{-s\lambda(x)}d\mu(x)\leq
e^{-(1+\epsilon)s}\int_X |h|^2d\mu= e^{-(1+\epsilon)s}\|h\|^2_0.
$$
Hence (4.2) is satisfied. For (4.1) we will  use a comparison with a flat family of metrics, which we define for $0\leq \Re t\leq s$ by
\be
|h|_t^2=\int_X |h|^2 e^{-\Re t\lambda(x)}d\mu(x).
\ee
This is  a flat metric in the sense that any element $h$ in $H_0$ can be extended holomorpically as $h_\zeta=h e^{\zeta\lambda/2}$ in such a way that
$$
\|h_\zeta\|^2_{\Re\zeta}
$$
is constant. Since $|h|_t$ coincides with $\|h\|^2_t$ for $t=0$ and $t=s$ it follows that
$$
\|h\|_t^2\geq|h|^2_t
$$
for $t$ between 0 and $s$. This is a consequence of a minimum principle for positively curved metrics that we will return to shortly. Accepting this for the moment the argument continues as follows.

Since  $r_s$ and $h_s$ are orthogonal for the scalar product defined by $|\cdot|_t$,
$$
\int_0^s e^t\|h\|^2_t dt\geq\int_0^s e^t|h|^2_t dt\geq \int_0^s e^t|r_s|^2_t dt.
$$
By the definition of $r_s$ 
$$
|r_s|^2_t\geq e^{-t(1+\epsilon)}\|r_s\|^2_0.
$$
Hence
$$
\int_0^s e^t|r_s|^2_t dt\geq \int_0^s e^{-\epsilon t}dt\|r_s\|^2_0\geq 1/(2\epsilon)\|r_s\|^2_0,
$$
since $s>1/\epsilon$. All in all
$$
\|r_s\|^2_0\leq 2\epsilon \|h\|^2
$$
so we have proved (3.4).

Let us now finally return to the minimum principle used above. For bundles of finite rank, this is a consequence of a well known theorem, see \cite{Berman-Keller}, Lemma 8.11,  and the references there. In our case, when one of the bundles is flat, the proof is actually easier, as pointed out to us by Laszlo Lempert \cite{Laszlo}, see also \cite{2Lempert}. Let $\|\cdot\|_{-t}$ and $|\cdot|_{-t}$ be the dual norms of  $\|\cdot\|_t$ and $|\cdot|_t$ respectively; they both depend only on $\Re t$. It suffices to prove that the negatively curved metric $\|\cdot\|_{-\tau}$ is smaller than the flat metric  $|\cdot|_{-\tau}$ for $\Re \tau$ between 0 and $s$. Take $0<t_0<s$. Since  $|\cdot|_{-\tau}$ is flat we can, as explained above find for any $h$ in $H_0$ a holomorphic $h_\tau$ which equals $h$ for $\tau=t_0$ and has $|h_\tau|_{-\tau}$ constant. Then 
$$
\psi(\tau)\leq \log \|h_\tau\|_{-\tau}-\log |h_\tau|_{-\tau}
$$
is subharmonic and equal to zero when $\Re \tau$ equals zero or $s$. By the maximum principle $\psi(t_0)\leq 0$ so $\|h\|_{-t_0}\leq |h|_{-t_0}$ as we wanted.
\end{proof}
We are now ready to prove the openness theorem.
\begin{thm}Let $u$ be a negative plurisubharmonic function in the unit ball $B$. Assume that
$$
\int_B e^{-u}=A<\infty.
$$
Then for $p<1+c_n/A$, where $c_n$ is a constant depending only on the dimension,
$$
\int_{B/2} e^{-pu}<\infty.
$$
\end{thm}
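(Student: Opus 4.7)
The plan is to bootstrap the hypothesis that $e^{-u}$ is integrable on $B$ into integrability of $e^{-pu}$ on $B/2$, using Theorem 4.3 applied to the fixed section $h\equiv 1$. The ambient bundle is the trivial bundle with fibre $A^2(B)$ equipped with the norms $\|\cdot\|_s$; since $u(s,z):=u_{\Re s}(z)$ is plurisubharmonic in $\Omega\times B$ and depends only on $\Re s$, Theorem 3.1 makes it positively curved, so Theorem 4.3 applies. Reading Proposition 4.1 with $p=1$ and $h\equiv 1$ gives
$$
a_1 \int_0^{\infty} e^{s}\|1\|_s^2\,ds \;=\; A - b_1|B| \;\leq\; A,
$$
so the global norm $\|1\|^2$ appearing in Theorem 4.3 is bounded by $A/a_1$.

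Applying Theorem 4.3 to $h\equiv 1$ with a parameter $\epsilon>0$ to be fixed, for each $s>1/\epsilon$ I obtain an element $h_s\in A^2(B)$ with
$$
\|1-h_s\|_0^2 \;\leq\; 2\epsilon\,\|1\|^2 \;\leq\; 2\epsilon A/a_1, \qquad \|h_s\|_s^2 \;\leq\; e^{-(1+\epsilon)s}|B|.
$$
The standard mean-value inequality for the holomorphic function $1-h_s$ on $B$ yields a pointwise bound on the concentric half-ball,
$$
\sup_{z\in B/2}|1-h_s(z)|^2 \;\leq\; K_n\|1-h_s\|_0^2 \;\leq\; 2K_n\epsilon A/a_1,
$$
with $K_n$ a purely dimensional constant. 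Setting $\epsilon := c_n/A$ with $c_n := a_1/(8K_n)$ forces the right-hand side to be at most $1/4$, so $|h_s|\geq 1/2$ on $B/2$. Consequently
$$
\int_{B/2}e^{-2u_s}\,d\lambda \;\leq\; 4\int_{B/2}|h_s|^2e^{-2u_s}\,d\lambda \;\leq\; 4\|h_s\|_s^2 \;\leq\; 4|B|\,e^{-(1+\epsilon)s}
$$
for every $s>1/\epsilon$, while trivially $\int_{B/2}e^{-2u_s}\,d\lambda\leq |B/2|$ for all $s\geq 0$, since $u_s\geq 0$.

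Finally, the pointwise identity that underlies the proof of Proposition 4.1 integrates over any subdomain on which $u<0$, in particular over $B/2$: for $0<p<2$,
$$
\int_{B/2}e^{-pu}\,d\lambda \;=\; a_p\int_0^{\infty} e^{ps}\int_{B/2}e^{-2u_s}\,d\lambda\,ds + b_p|B/2|.
$$
Splitting the outer integral at $s=1/\epsilon$ and inserting the two bounds above makes the right-hand side finite as soon as $p<1+\epsilon = 1+c_n/A$, which is the desired conclusion.

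The main obstacle is the step that converts $L^2$-closeness of $h_s$ to $1$ on $B$ into a pointwise lower bound on $B/2$: this is what forces the shrinking of the ball and it is the only source of the dimensional factor that enters $c_n$. A subsidiary but essential point is that the element $h_s$ produced by Theorem 4.3 really is holomorphic, which holds because the Hilbert space $H_0$ in that theorem is here the Bergman space $A^2(B)$, so all its elements are holomorphic functions on $B$ for which the mean-value inequality applies.
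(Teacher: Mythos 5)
Your proposal is correct and follows essentially the same route as the paper's own proof: apply Theorem 4.3 to $h\equiv 1$, use the sub-mean-value inequality on the concentric half-ball to turn the $L^2$ bound $\|1-h_s\|_0^2\leq 2\epsilon\|1\|^2$ into the pointwise lower bound $|h_s|\geq 1/2$ on $B/2$ with $\epsilon\sim c_n/A$, deduce $\int_{B/2}e^{-2u_s}\leq 4|B|e^{-(1+\epsilon)s}$, and feed this back into Proposition 4.1 over $B/2$. Your write-up is in fact slightly more careful on two minor points (the trivial bound on $[0,1/\epsilon]$ and keeping $p<1+\epsilon$ rather than $1+\epsilon/2$), neither of which changes the argument.
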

\begin{proof}
We apply Theorem 4.3 to $h=1$. Note first that there is a constant $\delta_n$ such that if $g$ is holomorphic in the ball and 
$$
\int_B |g|^2 \leq \delta_n
$$
then $\sup_{B/2}|g|\leq 1/2$. By Theorem 4.3, we can for any $\epsilon>0$ and $s>1/\epsilon$ find a holomorphic function $h_s$ in the ball such that
$$
\|h_s\|^2_s\leq |B|e^{-(1+\epsilon)s}
$$
and 
$$
\|h-h_s\|^2_0\leq 2\epsilon A.
$$
If $\epsilon\leq \delta_n/(2A)$ it follows that $\sup_{B/2}|h-h_s|<1/2$. Since $h$ is identically 1, this implies that $|h_s|>1/2$ on $B/2$. Hence
$$
\int_{B/2} e^{-2u_s}\leq 4\int_B |h_s|^2 e^{-2u_s}=4\|h_s\|^2_s\leq 4|B|e^{-(1+\epsilon)s}.
$$
Hence, if $p<(1+\epsilon/2)$
$$
e^{ps} \int_{B/2} e^{-2u_s}\leq Ce^{-s\epsilon/2}.
$$
Integrating from $1/\epsilon$ to infinity we find by applying Proposition 4.1 again (with $B$ replaced by $B/2$) that
$$
\int_{B/2} e^{-pu}<\infty.
$$
\end{proof}\section{A conjectural picture for strong openness}

Let us first state the 'strong openness conjecture' from Demailly, \cite{Demailly}. It says that if as before $u$ is  plurisubharmonic in the ball and  $h$ is holomorphic in the ball, then the set of $p>0$ such that

$$
|h|^2 e^{-pu}
$$
is integrable in some neighbourhood of the origin, is open. The original openness conjecture is thus the case of strong openness when $h=1$. The strong openness conjecture was proved by Guan-Zhou in \cite{Guan-Zhou} and Hiep, \cite{Hiep}. Here we will discuss how this problem might be related to the methods described above, in the simpler case when we look at integrability over a compact manifold instead of some neighbourhood of the origin. 

First of all, to motivate our discussion, let us say a few words about the openness problem in one variable. Then the subharmonic function $u$ can be written locally as the sum of a harmonic part - which does not affect the inegrability- and a potential
$$
p(z)=\int\log|z-\zeta|^2 d\mu(\zeta),
$$
where $\mu=\Delta u$ is a positive measure. It is very easy to see that 
$$
e^{-pu}
$$
is integrable in some neighbourhood of the origin if and only if $\mu(\{0\})<1$. In the same way
$$
|h|^2e^{-pu}
$$
is integrable if and only if $\mu(\{0\})<k+1$, where $k$ is the order of the zero of $h$ at the origin.

We now elaborate a little bit on the 'moral' of Proposition 4.1 as described in section 4. To simplify matters somewhat we  consider a variant of the setting in section 4, where instead of a space of holomorphic functions in the ball we look at the space of sections of a line bundle over a compact manifold, so that we are dealing with finite dimensional spaces. 

Let $L\to X$ be an ample line bundle over a compact manifold. We will consider the space $H^0(X, K_X+kL)$ of holomorphic sections of the adjoint bundles $K_X+kL$. On $L$ we consider two metrics, $\phi$ and $\phi_0$, where $\phi_0$ is positively curved and smooth whereas $\phi$ is a singular metric with $i\ddbar\phi\geq 0$. We are interested in when integrals 
$$
\int_X |h|^2 e^{-(\phi+(k-1)\phi_0)},
$$
with $h$ in $H^0(X,K_X+kL)$, are finite. Let $u=(\phi-\phi_0)$. Then 
$$
\int_X |h|^2 e^{-(\phi+(k-1)\phi_0)}=\int_X|h|^2e^{-(u+k\phi_0)},
$$
and $u$ is $\omega$-plurisubharmonic for $\omega=i\ddbar\phi_0$, i e 
$i\ddbar\phi +\omega\geq 0$.
As in the previous section we put $u_s=\max(u+\Re s,0)$ for $\Re s\geq 0$. Then $u_s$ is $\omega$-plurisubharmonic on the product of the halfplane, $\Omega$,  with $X$.   We let
$$
\|h\|^2_s:=\int_X|h|^2e^{-(2u_s+k\phi_0)}.
$$
If $k\geq 2$ then $i\ddbar_{s,X}(2u_s+k\phi_0)\geq 0$. Hence we can apply the manifold version of Theorem 3.1 from \cite{Berndtsson} and conclude that the trivial vector bundle $\Omega\times H^0(X,K_x+kL)$ is positively curved when equipped with the metric $\|\cdot\|_s$. As before we get that if $h$ is in $H^0(X,K_X+kL)$ then
\be
\int_X |h|^2 e^{-(pu+k\phi_0)}=a_p\int_0^\infty e^{ps}\|h\|^2_s ds +b_p\|h\|_0.
\ee
Since the metric on $E$ depends only on the real part of $s$ we can make a change of variables $s=-\log\zeta$, where $\zeta$ is in the punctured unit disk. Abusing language slightly we let $\|h\|_\zeta=\|h\|_s$ if $s=-\log \zeta$. Then 
\be
\int_0^\infty e^{ps}\|h\|^2_s ds= (2\pi)^{-1}\int_\Delta e^{-(p+2)\log|\zeta|}\|h\|^2_\zeta d\lambda(\zeta).
\ee
We can now extend (the trivial) bundle $E$ as a vector bundle over the entire disk, including the origin, and consider 
$$
e^{-(p+2)\log|\zeta|}\|h\|^2_\zeta
$$
as a singular metric defined over the whole disk, see \cite{Berndtsson-Paun}, \cite{Raufi}. The only serious singularities of the metric are of course at the origin. Ideally, we would now have that this singular metric has a curvature $\Theta$ which has a smooth (or almost smooth) part outside the origin, plus a singular part
$\Theta_{sing}$
supported at the origin. With respect to a holomorphic frame, $\Theta_{sing}$ would the be represented by a matrix of Dirac masses at the origin which could be diagonalized in a suitable frame. The convergence of (5.2) should then be governed by the sizes of these Dirac masses, so that 
$$
\int_X |h|^2 e^{-\phi+(k-1)\phi_0},
$$ 
is finite precisely when $h$ lies in the union of the eigenspaces corresponding to Dirac masses strictly smaller than 1. In fact, our discussion of the onedimensional openness conjecture above says precisely that this holds when the rank of the bundle is one. 

There are  obstacles to making this picture rigorous. First, Raufi \cite{Raufi}, has given an example of a positively curved singular vector bundle metric, with only an isolated singularity, whose curvature does not have measure coefficients, but contains derivatives of Dirac measures. Still, it might be true that this cannot occur if the metric is $S^1$-invariant as it is in our case. If this turns out to be so it seems likely that the rest of the argument would go through so that the study of multiplier ideals, at least over compact manifolds, could be reduced to a vector valued problem over the disk.

\def\listing#1#2#3{{\sc #1}:\ {\it #2}, \ #3.}

\end{document}